\definecolor{darkgreen}{rgb}{0.,0.5,0.}
\numberwithin{equation}{section} \overfullrule 5pt
\newtheorem{Theorem}{Theorem}
\newtheorem{Conjecture}[Theorem]{Conjecture}
\newtheorem{Lemma}[Theorem]{Lemma}
\newtheorem{Proposition}[Theorem]{Proposition}
\theoremstyle{definition}
\DeclareMathOperator{\dif}{dif}
\DeclareMathOperator{\cycdif}{cycdif}
\DeclareMathOperator{\product}{prod}
\newcommand{\Sym}{\mathfrak{S}}
\title[On the existence of permutations]{%
	On the existence of permutations conditioned by 
certain rational functions} 
\date{October 6, 2019}
\author{Guo-Niu Han}
\address{I.R.M.A., UMR 7501, Universit\'e de Strasbourg
et CNRS, 7 rue Ren\'e Descartes, F-67084 Strasbourg, France}
\email{guoniu.han@unistra.fr}
\subjclass[2010]{05A05, 05B99, 05C05}
\keywords{permutation, rational function, existence, algorithm, binary tree}
\begin{document}
\begin{abstract} 
We prove several conjectures made by Z.-W. Sun on the existence of permutations conditioned by certain rational functions. 
Furthermore, we fully  characterize  all integer values 
of the ``inverse difference" rational function.
Our proofs consist of both investigation of the 
mathematical properties of the rational functions and brute-force attack
by computer for finding special permutations. 

\end{abstract}

\maketitle

\section{Introduction}

Permutations (see, for example, \cite{Flajolet2009Sed, Stanley2012EC1}) are studied in almost every branch of mathematics and also in computer science.
The number of permutations $\pi=(\pi(1), \pi(2), \ldots, \pi(n))\in \Sym_n$
of $\{1,2,\ldots,n\}$ is $n!$. In \cite{Sun2018P} Z.-W. Sun made several
conjectures about the existence of permutations conditioned by certain rational functions. In the paper we confirm three of them by proving the following theorem.

\begin{Theorem}\label{th:main1}
	{\rm(i)} For any integer $n>5$, there is a permutation $\pi\in \Sym_n$ 
	such that
	\begin{equation}\label{eq:dif}
		\sum_{k=1}^{n-1} \frac{1}{\pi(k)-\pi(k+1)}=0.
\end{equation}
	{\rm (ii)} For any integer $n>7$, there is a permutation $\pi\in \Sym_n$ 
	such that
	\begin{equation}\label{eq:cdif}
		\sum_{k=1}^{n-1} \frac{1}{\pi(k)-\pi(k+1)} + \frac{1}{\pi(n)-\pi(1)}=0.
	\end{equation}
	{\rm (iii)} For any integer $n>5$, there is a permutation $\pi\in \Sym_n$ 
	such that
	\begin{equation}\label{eq:prod}
		\sum_{k=1}^{n-1} \frac{1}{\pi(k)\pi(k+1)}=1.
\end{equation}
\end{Theorem}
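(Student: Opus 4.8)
The plan is to treat each part as an existence statement for all large $n$ and, in each case, to isolate an explicit \emph{enlargement} operation that increases $n$ by a fixed step while preserving the required identity; everything then reduces to a finite list of base permutations located by a brute-force tree search, exactly as the abstract advertises. The unifying difficulty to keep in mind throughout is \emph{rigidity}: the sums must hit the exact targets $0$ or $1$, so the enlargement steps must be engineered so that delicate cancellations are automatic.

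\textbf{Part (i).} Here I would use that the path sum in \eqref{eq:dif} is invariant under adding a constant to every value, since all consecutive differences are unchanged. A first attempt to glue a shifted copy between $1$ and $n+2$ fails because the two boundary terms turn out to have the same sign; the correct gadget places the two new values at the two ends without making them adjacent. Concretely, given a solution $\sigma$ of \eqref{eq:dif} on $\{1,\dots,n\}$ satisfying the invariant $\sigma(n)=\sigma(1)+1$, I would form $\tau=(n+1,\sigma(1),\dots,\sigma(n),n+2)$ on $\{1,\dots,n+2\}$. The $\sigma$-block is contiguous, so its internal terms still sum to $0$, and the only new terms are $\tfrac{1}{(n+1)-\sigma(1)}$ and $\tfrac{1}{\sigma(n)-(n+2)}=-\tfrac{1}{(n+2)-\sigma(n)}$; these cancel precisely because $(n+2)-\sigma(n)=(n+1)-\sigma(1)$ when $\sigma(n)=\sigma(1)+1$. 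Since $\tau(n+2)=\tau(1)+1$, the invariant propagates, giving an induction with step $2$. (One can check that single-element insertion can never have zero effect, since the relevant equation $(b-a)^2=ab$ has no integer solution, which is why step $2$ and the endpoint invariant are needed.) This reduces part (i) to the smallest admissible values of each parity carrying the invariant, found by computer.

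\textbf{Part (ii).} The cyclic sum in \eqref{eq:cdif} is negated both by reversing the cycle and by applying the complement $\phi(x)=n+1-x$. Hence any cyclic arrangement that is invariant (up to rotation) under $\phi$ has value equal to its own negative, forcing it to be $0$. For \emph{even} $n$ this is essentially free: pick any representative from each complementary pair $\{x,n+1-x\}$ to fill the first half and set $\pi(k+n/2)=n+1-\pi(k)$; the resulting centrally symmetric necklace automatically satisfies \eqref{eq:cdif}. I expect the \emph{odd} case to be the main obstacle, because the central value $(n+1)/2$ is a fixed point of $\phi$ and breaks the half-turn symmetry (its two incident terms add rather than cancel). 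My plan for odd $n$ is to place this value at a controlled position and absorb the residual contribution by a single local modification, reducing odd $n$ to a handful of explicit base cases (consistent with the stricter threshold $n>7$) together with an enlargement step.

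\textbf{Part (iii).} The engine is the telescoping identity $\tfrac{1}{a(n+1)}+\tfrac{1}{(n+1)b}=\tfrac{1}{ab}$, which holds exactly when $a+b=n+1$: if a solution of \eqref{eq:prod} on $\{1,\dots,n\}$ contains an adjacent pair $\{a,b\}$ with $a+b=n+1$, then inserting the new maximum $n+1$ between them yields a solution on $\{1,\dots,n+1\}$ with the same value $1$. I would organize the argument as a search over such insertions, seeded by base cases from the computer, for instance $2,1,3,4,5,6$ for $n=6$ and $2,1,3,7,4,5,6$ for $n=7$. The delicate point, and the part I expect to be hardest, is guaranteeing an applicable pair at \emph{every} stage: the most natural self-sustaining invariant, ``$1$ is adjacent to the current maximum,'' is not available at the smallest cases (one can verify that no $n=6$ solution places $1$ next to $6$, and that a greedy chain of insertions gets stuck already by $n=8$), so coverage of all $n$ will require either a more flexible invariant or supplementary explicit families, again validated by computer. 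Overall, across the three parts the genuine obstacle is not constructing the steps but controlling the \emph{exact} value, which is why a finite brute-force search for special permutations is unavoidable.
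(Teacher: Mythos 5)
Your proposal contains correct ideas, and for part (i) a genuinely different mechanism than the paper's, but two of the three parts stop short of a proof, and part (i) has a concrete hole. The part (i) step itself is sound: with $\Phi_{\dif}(\sigma)=0$ and $\sigma(n)=\sigma(1)+1$, the two terms created in $\tau=(n+1,\sigma(1),\ldots,\sigma(n),n+2)$ are $\tfrac{1}{(n+1)-\sigma(1)}$ and $\tfrac{1}{\sigma(n)-(n+2)}=-\tfrac{1}{(n+1)-\sigma(1)}$, so they cancel and the invariant propagates (the paper instead advances in steps of $3$ via its Link Lemma~\ref{th:link}, gluing blocks that fix both endpoints). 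However, your invariant is \emph{unattainable} at $n=6$, so ``the smallest admissible values of each parity'' do not cover the theorem's range. Indeed, writing $d_k=\pi(k)-\pi(k+1)$, the invariant forces $\sum_k d_k=\pi(1)-\pi(6)=-1$; but every multiset of five nonzero integers in $[-5,5]$ with $\sum_k 1/d_k=0$ is either a zero triple $\pm\{1,-2,-2\}$ or $\pm\{2,-4,-4\}$ together with a pair $\{a,-a\}$, or one of $\pm\{2,2,-3,-3,-3\}$, $\pm\{1,-4,-4,-4,-4\}$, so that $\sum_k d_k\in\{\pm3,\pm5,\pm6,\pm15\}$, never $\pm1$. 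Consequently the even chain must be seeded at $n=8$, not $n=6$; invariant-carrying seeds do exist for both parities, e.g.\ $(4,1,7,3,6,2,5)$ for $n=7$ and $(1,4,7,6,3,5,8,2)$ for $n=8$, but $n=6$ then has to be settled separately by an invariant-free solution such as $(1,4,2,5,3,6)$. With that patch, your part (i) works.

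The genuine gaps are in parts (ii) and (iii), where your text stops exactly where the paper's work begins. For (ii), your central-symmetry argument for even $n$ is correct and slick --- the paper's even-$n$ permutation $(1,2,\ldots,k,2k,2k-1,\ldots,k+1)$ is in fact an instance of your construction --- but for odd $n$, which is the entire difficulty and the reason for the threshold $n>7$, you offer only an intention (``absorb the residual contribution by a single local modification''), not a construction. The paper's odd case is a two-ingredient gluing: a permutation $\sigma$ of $\{1,\ldots,k\}$ with $\sigma(1)=1$, $\sigma(k)=k$, $\Phi_{\dif}(\sigma)=0$ (Proposition~\ref{th:prop1}), followed by the reverse of a permutation $\tau$ of $\{1,\ldots,k+1\}$ with $\tau(1)=1$, $\tau(k+1)=k$, $\Phi_{\dif}(\tau)=0$ (Proposition~\ref{th:prop2}), shifted by $k$; the two cyclic junction terms are then $1/(k-2k)=-1/k$ and $1/((k+1)-1)=+1/k$, which cancel. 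Producing the second family for every $k\geq 8$ is precisely the content of Proposition~\ref{th:prop2}, and nothing in your sketch replaces it. For (iii), your telescoping identity is exactly the paper's Insertion Lemma~\ref{th:insertion}, and you correctly isolate the crux (an adjacent pair summing to $n+1$ must exist at every stage) and correctly observe that the greedy chain from $(2,1,3,4,5,6)$ dies at $n=8$; but you then leave the crux open. The paper resolves it by abandoning that chain, reseeding at $\delta_8=(6,4,1,2,7,5,3,8)$, and maintaining permanently the tree-shaped pattern of Figure~\ref{fig:1}, in which consecutive odd numbers stay adjacent (absorbing insertions of entries $\equiv 4 \pmod 8$ and, at the node $2$, all new odd entries) and the even entries of the forms $4k+2$ and $8k$ form chains absorbing the rest, so an admissible insertion site exists at every step. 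Some self-sustaining invariant of this kind is indispensable; without it, your parts (ii) (odd case) and (iii) are statements of strategy rather than proofs.
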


\medskip

Since $n!$ is a huge number for large $n$,
the generation of all $n!$ permutations of $n$ by computer is already a challenge \cite{Sedgewick1977}. 
For this reason \eqref{eq:dif}-\eqref{eq:prod} have only been 
verified for very small $n$.
Our proof of Theorem \ref{th:main1} consists of both investigation of the 
mathematical properties of the rational functions and brute-force attack
by computer for finding certain special permutations.

\medskip

Furthermore, we can fully  characterize  all integer values 
of the ``inverse difference" rational function given 
in the left-hand side of \eqref{eq:dif}.
We define
\begin{equation}\label{eq:dif_full}
V_n=\left\{\sum_{k=1}^{n-1} \frac{1}{\pi(k)-\pi(k+1)}  : \pi\in\Sym_n\right\}.
\end{equation}
Since $a\in V_n$ implies $-a\in V_n$ by the reverse of permutation, we only need
to study the nonnegative integer values of $V_n$. For example, $n=5$, 
the value set $V_5$ contains the following nonnegative rational number:
$$
\frac {1}{12}, \frac {1}{6}, \frac{1}{4}, 
\frac {1}{3}, \frac 12, \frac {7}{12}, \frac 23, \frac 34, 1, \frac 76,
\frac 43, \frac 32, \frac {19}{12}, \frac 74, \frac {11}{6}, \frac {23}{12}, 
2, \frac {13}{6}, \frac {11}{4}, 4.
$$
We see that there are three integers $1,2,4$ in the above list.

\begin{Theorem}\label{th:main2}
	We have 
	$V_3 \cap \mathbb{N} = \{2\}$,
	$V_5 \cap \mathbb{N} = \{1,2,4\}$, and for $n\not= 3, 5$,
	\begin{equation}
		V_n \cap \mathbb{N} = \{0\leq j\leq n-1 \mid j\not= n-2\}. 
\end{equation}
\end{Theorem}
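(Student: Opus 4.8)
The plan is to first pin down the two ``ceilings'' of the value set and then construct permutations realizing every remaining integer.

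Write $d_k = \pi(k) - \pi(k+1)$, so each $d_k$ is a nonzero integer and $1/d_k \le 1$, with equality iff $d_k = 1$. Hence $\sum_{k=1}^{n-1} 1/d_k \le n-1$, with equality exactly for the decreasing permutation $(n, n-1, \ldots, 1)$; this simultaneously caps $V_n \cap \mathbb{N}$ by $n-1$ and shows $n-1 \in V_n$. To exclude $n-2$, I would study the \emph{loss} $L := (n-1) - \sum_k 1/d_k = \sum_k (1 - 1/d_k)$. For $d_k \ge 1$ the summand lies in $[0,1)$, whereas for $d_k \le -1$ it equals $1 + 1/|d_k| > 1$. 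Therefore $L = 1$ is impossible unless every $d_k$ is positive; but an all-positive difference sequence forces $\pi$ to be strictly decreasing, i.e.\ the reverse identity, for which $L = 0$. This contradiction yields $n-2 \notin V_n$ for every $n$, so that $V_n \cap \mathbb{N} \subseteq \{0 \le j \le n-1 : j \ne n-2\}$.

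It then remains to realize each $j \in \{0, 1, \ldots, n-3\}$. The value $0$ is supplied by Theorem \ref{th:main1}(i) for $n \ge 6$ and by a single explicit example such as $(4,2,3,1)$ for $n = 4$; a finite check shows $0 \notin V_3$ and $0 \notin V_5$, and I would argue that these two absences are the \emph{only} departures from the generic answer, which is exactly what produces $V_3 \cap \mathbb{N} = \{2\}$ and $V_5 \cap \mathbb{N} = \{1,2,4\}$. For the positive targets a convenient inductive device is the following: if a value $v$ is attained by some $\sigma \in \Sym_m$ whose first entry is the maximum $m$, then prefixing $m+1$ gives a permutation of $\{1, \ldots, m+1\}$ with first entry $m+1$ and value $v + \tfrac{1}{(m+1)-m} = v+1$. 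Iterating this ``prepend-the-maximum'' step lifts a small base permutation, starting at its maximum, to any larger size while raising the value by the number of prepends, and the reversal symmetry $a \in V_n \Leftrightarrow -a \in V_n$ keeps the bookkeeping symmetric.

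The crux, and the step I expect to fight hardest, is assembling enough base permutations, each starting at its maximum and attaining a small value, so that the prepend operation sweeps out all of $1, \ldots, n-3$. Realizing a prescribed value amounts to finding a Hamiltonian path on $\{1, \ldots, n\}$ whose difference multiset has the prescribed reciprocal sum, and the naive endpoint-preserving ``swap'' gadgets are too coarse: internally rearranging a decreasing block (for instance into $(v+2, v, v+1, v-1)$) tends to change the value only by multiples of $3$, so finer adjustments force the gadget to interact with its neighbours and couple globally to the rest of the permutation. I anticipate resolving this by a short case analysis on the residue of $j$, using a handful of explicit families together with base permutations located by computer search; once these are in hand, the prepend induction and the reversal symmetry combine to give the full statement.
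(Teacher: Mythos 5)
Your exclusion half is complete and correct: writing $L=\sum_k(1-1/d_k)$, you show at once that $\Phi_{\dif}\le n-1$ with equality only for $(n,n-1,\ldots,1)$, and that $L=1$ is impossible (all $d_k>0$ forces the decreasing permutation, hence $L=0$, while a single negative $d_k$ already forces $L>1$), so $n-2\notin V_n$ and no integer $\ge n$ occurs. This is the same bound the paper asserts, argued more carefully. Your prepend-the-maximum device is also correct: prefixing $m+1$ to a permutation of $\Sym_m$ that starts with $m$ adds exactly $\tfrac{1}{(m+1)-m}=1$. It is the mirror image, under reversal, of the step the paper uses in its induction on the strengthened sets $V_n'=\{\Phi_{\dif}(\pi):\pi\in\Sym_n,\ \pi(n)=n\}$; in fact your version has the signs right, since appending $n$ after $n-1$, as the paper literally writes it, contributes $-1$ rather than $+1$.

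The genuine gap is that the construction half is never carried out: you stop at ``assemble enough base permutations, located by computer search,'' and that assembly is precisely the content of the theorem --- it is where the paper does its real work (Proposition~\ref{th:prop1} plus computer-checked cases $n\le 6$ feeding an induction on $n$). What your sweep actually needs, and what you never supply, is: (a) for every $m\ge 4$ with $m\ne 5$, a value-$0$ permutation of $\Sym_m$ whose \emph{first} entry is $m$ --- these you already have for free by reversing the permutations of Proposition~\ref{th:prop1} (reversal fixes the value $0$ and moves the maximum to the front), together with $(4,2,3,1)$ for $m=4$; then $j$ prepends realize every $j$ with $n-j\ge 4$, $n-j\ne 5$; and (b) two patches for the exceptional targets $j=n-3$ and $j=n-5$, which your scheme otherwise misses because $\Sym_3$ and $\Sym_5$ contain no value-$0$ permutation at all: for instance $(4,2,1,3)$ (value $1$, starts with $4$) prepended $n-4$ times gives $n-3$, and $(6,4,2,1,3,5)$ (value $1$, starts with $6$) prepended $n-6$ times gives $n-5$. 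Note that no ``residue of $j$'' case analysis and no internal swap gadgets are involved; the only obstruction is at base sizes $3$ and $5$, the same two sizes responsible for the exceptional sets $V_3$ and $V_5$. As written, your proposal proves the containment $V_n\cap\mathbb{N}\subseteq\{0\le j\le n-1 : j\ne n-2\}$ and exhibits $0$ and $n-1$; the remaining values are a plan, not a proof, even though the plan is sound and coincides, up to reversal, with the paper's.
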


The proofs of Theorems \ref{th:main1} and \ref{th:main2} will be given in Section 2.
Notice that we are still not able to prove three other conjectures of Sun.
Let us reproduce them below for interested readers. 

\begin{Conjecture}
	(i) \cite[Conj. 4.7(ii)]{Sun2018P}
For any integer $n>6$, there is a permutation $\pi\in \Sym_n$ 
	such that
	\begin{equation}\label{eq:sum}
		\sum_{k=1}^{n-1} \frac{1}{\pi(k)+\pi(k+1)}=1.
\end{equation}
Also, for any integer $n>7$, there is a permutation $\pi\in \Sym_n$ 
	such that
	\begin{equation}\label{eq:cycsum}
		\sum_{k=1}^{n-1} \frac{1}{\pi(k)+\pi(k+1)} + \frac{1}{\pi(n)+\pi(1)}=1.
	\end{equation}
	(ii) \cite[Conj. 4.8(ii)]{Sun2018P}
For any integer $n>7$, there is a permutation $\pi\in \Sym_n$ 
	such that
	\begin{equation}\label{eq:sqdif}
		\sum_{k=1}^{n-1} \frac{1}{\pi(k)^2-\pi(k+1)^2}=0.
\end{equation}

\end{Conjecture}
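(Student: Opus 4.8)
The plan is to reproduce, for each part of the conjecture, the two-part strategy that succeeds for Theorem~\ref{th:main1}: isolate an exact algebraic mechanism that lets one pass from size $n$ to size $n+c$ for some fixed step $c$, and then discharge the finitely many residue classes of base cases by computer. The whole difficulty is that the mechanism available for \eqref{eq:dif}--\eqref{eq:prod} breaks down here, so the first task is to locate a replacement.

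For \eqref{eq:sum} I would first record why the proven cases are easy and then try to imitate them. In \eqref{eq:dif} each term $1/(\pi(k)-\pi(k+1))$ depends only on the \emph{difference} of two consecutive entries, so a block of consecutive integers behaves identically wherever it is placed and one can build translation-invariant gadgets of contribution $0$; in \eqref{eq:prod} the telescoping $\tfrac{1}{k(k+1)}=\tfrac1k-\tfrac1{k+1}$ makes the identity permutation give $1-\tfrac1n$, which is then corrected by a local rearrangement of the tail. Neither feature survives for $1/(\pi(k)+\pi(k+1))$: every term is strictly positive, so no insertion of the new largest values $n+1,\dots,n+c$ can leave the sum unchanged, and there is no telescoping. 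My proposal is therefore a \emph{core-plus-snake} construction. Fix a bounded set of small entries $\{1,\dots,j\}$ arranged into a short word whose internal contribution equals a chosen constant $A<1$, and route the remaining large entries $\{j+1,\dots,n\}$ as a long ``snake'' attached to the core, so that their total contribution $B(n)$ is a sum of reciprocals of large pair-sums. One then has to show that $1-A$ is \emph{exactly} attainable: since local swaps inside the snake change $B(n)$ by explicit small rational increments, the problem becomes an Egyptian-fraction fine-tuning, i.e.\ hitting the target $1-A$ on the nose by choosing which swaps to perform. The cyclic version \eqref{eq:cycsum} fits the same scheme, with the snake closed into a cycle and the single wrap-around term absorbed into the core budget. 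Establishing that a valid choice of swaps always exists, for every large $n$, is the crux.

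For \eqref{eq:sqdif} the situation is friendlier because $\pi(k)^2-\pi(k+1)^2=(\pi(k)-\pi(k+1))(\pi(k)+\pi(k+1))$ changes sign, so cancellation is possible, just as for \eqref{eq:dif}. Two structural facts should be exploited. First, reversing a permutation sends $S=\sum 1/(\pi(k)^2-\pi(k+1)^2)$ to $-S$, so the attainable value set is symmetric about $0$; the aim is to show $0$ itself is hit. Second, for consecutive entries one has the clean identity $\tfrac{1}{a^2-(a+1)^2}=-\tfrac{1}{2a+1}$, which pairs an ascent and a descent over the same two values into exact opposites. I would search for a short ``zig-zag'' gadget on the new largest block whose ascents and descents cancel term-by-term, giving net contribution $0$ regardless of where the block starts, and then glue it to base permutations found by computer for each residue of $n$ modulo the gadget length.

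The principal obstacle, common to both parts, is that each summand now depends on the \emph{absolute} positions of the entries rather than merely on a difference, so a block cannot be made position-independent the way it can for \eqref{eq:dif}. In part~(i) this manifests as the positivity of every term, which forbids a zero-contribution insertion and forces the delicate Diophantine matching of $1-A$; in part~(ii) it manifests as the dependence on $a$, so that the zig-zag cancellation must be made robust under the shift $a\mapsto a+c$. I expect that securing this shift-robustness, and the exact hit of the target in the snake construction, will be where the real work lies.
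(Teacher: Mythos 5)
You were asked to prove a statement that the paper itself does not prove: it is reproduced there explicitly as one of Sun's conjectures that the author states he is ``still not able to prove,'' and the paper offers no argument for it --- only, for a related cyclic variant of \eqref{eq:sqdif}, computer-found witness permutations for $11<n<28$, located (in the author's words) ``in a highly non-trivial way.'' So there is no proof in the paper to compare against, and the only question is whether your proposal closes the gap. It does not: you explicitly defer the two decisive steps (``the crux,'' ``where the real work lies''), so what you have is a research plan whose hard kernel coincides exactly with the open problem, not a proof.

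To be concrete about why the deferred steps are genuine gaps and not routine verifications. For \eqref{eq:sum}, the core-plus-snake scheme reduces to showing that local swaps in the snake generate increments whose integer combinations hit the rational target $1-A$ \emph{exactly}; but these increments have denominators depending on the absolute entries involved, so as $n$ grows the available increments change, and exact cancellation of denominators is precisely the Egyptian-fraction difficulty --- you supply no lemma controlling it. Your structural claim motivating the scheme is also inaccurate: inserting $n$ between adjacent entries $a,b$ changes the sum by $\frac{1}{a+n}+\frac{1}{b+n}-\frac{1}{a+b}$, which can have either sign and vanishes whenever $a^2+ab+b^2=n(n-a-b)$, so positivity of the terms does not by itself forbid sum-preserving insertions; the obstruction analysis needs redoing. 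For \eqref{eq:sqdif}, the proposed term-by-term cancellation cannot occur inside a single permutation: the identity $\frac{1}{a^2-(a+1)^2}=-\frac{1}{2a+1}$ pairs an ascent and a descent over the \emph{same} unordered pair $\{a,a+1\}$, but each unordered pair is adjacent at most once in a permutation, so these two terms never coexist. Any zero-sum zig-zag gadget must therefore cancel across \emph{different} pairs, and since each term $\frac{1}{a^2-b^2}$ depends on absolute values, the contribution of a fixed pattern varies with the shift $a\mapsto a+c$; the shift-robustness you flag as remaining work is thus not a technicality but the entire problem, and the absence of any visible periodic structure in the paper's computer-found witnesses for the cyclic variant suggests no such gadget is currently known.
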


Motivated by \eqref{eq:sqdif}, we make the following conjecture.
\begin{Conjecture}\label{conj:cycsqdif}
For any integer $n>11$, there is a permutation $\pi\in \Sym_n$ 
	such that
	\begin{equation}\label{eq:cycsqdif}
		\sum_{k=1}^{n-1} \frac{1}{\pi(k)^2-\pi(k+1)^2}
		+ \frac{1}{\pi(n)^2-\pi(1)^2}=0.
\end{equation}
\end{Conjecture}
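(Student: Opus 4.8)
The plan is to translate the problem into graph-theoretic language and build the required permutation from small self-canceling gadgets. Writing the cyclic sum as running over all $n$ ordered adjacent pairs (including the wrap-around pair $(\pi(n),\pi(1))$), I regard $\pi$ as a directed Hamiltonian cycle on the value set $\{1,\ldots,n\}$, where traversing an ordered pair $(a,b)$ contributes the signed weight $1/(a^2-b^2)$. The basic observation is the cancellation criterion: two directed pairs $(a,b)$ and $(c,d)$ contribute opposite weights exactly when
\[
\frac{1}{a^2-b^2}+\frac{1}{c^2-d^2}=0 \iff a^2-b^2=d^2-c^2 \iff a^2+c^2=b^2+d^2,
\]
i.e.\ precisely when the two \emph{tails} and the two \emph{heads} have equal sums of squares. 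This ties the whole problem to integers admitting two representations as a sum of two squares. The canonical gadget is then a zero-sum $4$-cycle built from an equal-sum quadruple $\{a,b,c,d\}$ with $a^2+d^2=b^2+c^2$: the directed $4$-cycle $a\to b\to d\to c\to a$ splits into the two canceling pairs $(a,b),(d,c)$ and $(b,d),(c,a)$, so its total weight is $0$. The prototype is $\{1,4,7,8\}$ (since $1^2+8^2=4^2+7^2=65$), giving the cycle $1\to4\to8\to7\to1$ with weights $-\tfrac1{15},-\tfrac1{48},\tfrac1{15},\tfrac1{48}$.

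The construction strategy is to cover $\{1,\ldots,n\}$, up to a bounded remainder, by disjoint equal-sum quadruples, place a zero-sum $4$-cycle on each, and then splice all of these $4$-cycles into a single Hamiltonian cycle without changing the total weight. For the splicing I would establish a \emph{neutral merge} lemma: given two zero-sum cycles $C_1,C_2$ and edges $(u,v)\in C_1$, $(p,q)\in C_2$, reconnecting them into $u\to q\to\cdots\to p\to v\to\cdots\to u$ yields one cycle whose total weight changes by
\[
\frac{1}{u^2-q^2}+\frac{1}{p^2-v^2}-\frac{1}{u^2-v^2}-\frac{1}{p^2-q^2},
\]
so the merge is weight-neutral precisely when this expression vanishes. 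Iterating neutral merges over all the $4$-cycles produces the desired Hamiltonian cycle of total weight $0$. Structurally I would run this as an induction with step $4$: a valid configuration for $n$ is extended to $n+4$ by adjoining a fresh equal-sum quadruple drawn from the new large values and merging it in neutrally. The finitely many base sizes $n=12,\ldots,N_0$ and the four residue classes modulo $4$ (which govern how many small ``leftover'' values must be absorbed into an enlarged explicit starting block) would be handled by direct computer search, exactly in the hybrid spirit of the proofs of Theorems~\ref{th:main1} and~\ref{th:main2}.

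\textbf{The main obstacle} is twofold and is, I expect, the reason the statement is only conjectural. First there is a Diophantine covering problem: one must show that for every $n$ (beyond the base range) the initial segment $\{1,\ldots,n\}$ can actually be partitioned into equal-sum-of-two-squares quadruples, up to a controlled remainder. The large values are unproblematic because integers with two essentially distinct representations as a sum of two squares have positive density, but the small and ``stubborn'' values (such as $3,5,6$, and those with few representations) must be fitted into quadruples using in-range partners, and proving that such a partition always exists is delicate. Second, even granting the partition, one must guarantee that a consistent sequence of \emph{neutral} merges exists and globally assembles the pieces into a single cycle rather than several; the neutrality condition above is one exact rational equation per merge, and ensuring it can always be met simultaneously with connectivity is the genuinely hard combinatorial step.

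To bridge this gap I would pursue two complementary reductions. On the theoretical side, I would try to relax exact neutrality by allowing each merge to introduce a small, explicitly computable discrepancy and then cancel the accumulated discrepancies with a single dedicated correction block, so that the construction only needs \emph{approximate} control plus one exact finite adjustment. On the computational side, I would verify the base cases and a finite library of mergeable quadruple families by brute force, and attempt to prove that for all sufficiently large $n$ the abundance of equal-sum quadruples makes a greedy or flow-based assembly always succeed, thereby reducing Conjecture~\ref{conj:cycsqdif} to finitely many machine-checkable instances.
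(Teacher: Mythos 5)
You should first note that the paper contains no proof of this statement: Conjecture~\ref{conj:cycsqdif} is precisely that, a conjecture, supported in the paper only by computer-found permutations $\pi_{12},\ldots,\pi_{27}$ verifying \eqref{eq:cycsqdif} for $11<n<28$. So there is no paper argument to match, and any complete proof you supplied would be new mathematics. Your local algebra is correct as far as it goes: the cancellation criterion $1/(a^2-b^2)+1/(c^2-d^2)=0 \iff a^2+c^2=b^2+d^2$ is right, the gadget $\{1,4,7,8\}$ with $1^2+8^2=4^2+7^2=65$ does give a zero-weight directed $4$-cycle, and your merge-discrepancy formula is the correct bookkeeping for splicing two cycles. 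But the proposal is a research program, not a proof, and you say so yourself; it reduces the conjecture to two open sub-problems rather than settling it.

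The two gaps are genuine and neither is bridged. First, the covering step --- that $\{1,\ldots,n\}$ can always be partitioned, up to a bounded remainder, into quadruples with $a^2+d^2=b^2+c^2$ using only in-range partners --- is an unproven Diophantine statement; density of numbers with two representations as a sum of two squares handles large values but says nothing about absorbing the small stubborn ones uniformly in $n$. Second, and more fatally, your \emph{neutral merge} imposes one exact rational equation per splice, which generic edge pairs do not satisfy, and you give no mechanism for choosing splices that do; your fallback (accumulate discrepancies, cancel them with one dedicated correction block) is an existence claim of essentially the same difficulty as the original problem, since you would need a block of prescribed total weight, prescribed endpoints, built from leftover values, with no bound established on the accumulated discrepancy. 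It is instructive to compare with the paper's successful constructions for $\Phi_{\dif}$ and $\Phi_{\cycdif}$ in Theorem~\ref{th:main1}: the Link lemma glues two zero-sum permutations at a shared boundary value, so \emph{no new cross term appears at all}, and in the cyclic odd case the two extra boundary terms $1/(k-2k)$ and $1/((k+1)-1)$ cancel by design. In the squared-difference cyclic setting every splice of disjoint cycles necessarily creates new terms $1/(u^2-q^2)$, $1/(p^2-v^2)$ that do not cancel for free --- this structural difference is exactly why the paper's link/insertion strategy does not transfer, and plausibly why the statement remains open.
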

Conjecture \ref{conj:cycsqdif} has been checked for $11<n<28$ by computer. We list 
below the permutations satisfying  \eqref{eq:cycsqdif}, which are found by our computer program 
in a highly non-trivial way.
{\small
\begin{align*}
\pi_{12} &= ( 1,4,3,5,7,2,12,8,10,11,9,6 ),\\
\pi_{13} &= ( 1,2,12,8,9,6,11,10,7,5,13,4,3 ),\\
\pi_{14} &= ( 1,2,12,9,6,4,3,13,8,7,5,10,14,11 ),\\
\pi_{15} &= ( 1,9,2,3,12,10,11,5,4,14,6,15,13,8,7 ),\\
\pi_{16} &= ( 1,3,2,4,5,11,16,14,10,8,6,12,9,15,13,7 ),\\
\pi_{17} &= ( 1,3,2,4,5,9,15,6,12,16,11,10,14,13,8,7,17 ),\\
\pi_{18} &= ( 1,3,2,4,6,5,7,13,8,14,12,16,10,18,17,9,11,15 ),\\
\pi_{19} &= ( 1,3,2,4,6,5,7,8,12,18,17,13,9,15,11,10,16,19,14 ),\\
\pi_{20} &= ( 1,3,2,4,6,5,7,18,8,13,12,17,9,20,16,19,10,11,15,14 ),\\
\pi_{21} &= ( 1,3,2,4,6,5,7,17,8,20,16,9,12,18,15,13,19,21,11,14,10 ),\\
\pi_{22} &= ( 1,3,2,4,6,5,7,8,20,13,17,22,18,12,9,15,21,19,16,11,10,14 ),\\
\pi_{23} &= ( 1,3,2,4,6,14,10,18,12,8,20,7,5,21,15,11,17,13,22,23,16,19,9 ),\\
\pi_{24} &= ( 1,3,2,4,6,14,10,18,12,8,5,9,21,11,24,16,20,22,17,15,13,19,23,7 ),\\
\pi_{25} &= ( 1,3,2,4,6,14,10,18,12,8,5,16,24,9,21,23,7,17,15,11,13,22,20,19,25 ),\\
\pi_{26} &= ( 1,3,2,4,6,14,10,18,12,8,22,13,5,23,16,20,19,21,9,7,17,11,25,15,24,26 ),\\
\pi_{27} &= ( 1,3,2,4,6,14,10,18,12,8,22,13,9,5,11,21,23,16,26,19,25,27,17,15,24,20,7 ).
\end{align*}
}

\section{Proofs}

Let $\Phi_{\dif}(\pi)$, $\Phi_{\cycdif}(\pi)$,  and 
$\Phi_{\product}(\pi)$ denote the three rational functions expressed in  the left-hand side of \eqref{eq:dif}, \eqref{eq:cdif},  and \eqref{eq:prod},respectively. 
The following {\it Link lemma} is useful for our construction. 

\begin{Lemma}[Link]\label{th:link}
	Let $\sigma\in\Sym_s $ and $\tau\in\Sym_t$ be two permutations 
	on $\{1,2,\ldots, s\}$ and $\{1,2,\ldots, t\}$, respectively, such that $\sigma(s)=s, \tau(1)=1$ and $\Phi_{\dif}(\sigma)=\Phi_{\dif}(\tau)=0$.
	We define the {\it ``link"} of the two permutations
	 $\rho \in \Sym_{s+t-1}$ by
	\begin{equation*}
		\rho(k)=
		\begin{cases}
			\sigma(k),  & \text{if $1\leq k\leq s$}\\
			s-1+\tau(k-s+1). & \text{if $s+1\leq k\leq s+t-1$} 
		\end{cases}
	\end{equation*}
	Then, we have  $\Phi_{\dif}(\rho)=0$.
	Furthermore, if $\tau(t)=t$, we have
$\rho(s+t-1)=s+t-1$.
\end{Lemma}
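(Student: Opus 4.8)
The plan is to verify directly that $\rho$ is a genuine permutation and then to evaluate $\Phi_{\dif}(\rho)$ by splitting its defining sum at the seam $k=s$ where the two blocks meet.

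First I would check that $\rho\in\Sym_{s+t-1}$. On the range $1\le k\le s$ the map $\rho$ agrees with $\sigma$, hence takes each value in $\{1,\ldots,s\}$ exactly once; in particular $\rho(s)=\sigma(s)=s$. On the range $s+1\le k\le s+t-1$, writing $j=k-s+1\in\{2,\ldots,t\}$, we have $\rho(k)=s-1+\tau(j)$. Since $\tau(1)=1$, the values $\tau(2),\ldots,\tau(t)$ run over $\{2,\ldots,t\}$, so $\rho$ takes each value in $\{s+1,\ldots,s+t-1\}$ exactly once. The two blocks partition $\{1,\ldots,s+t-1\}$, so $\rho$ is a bijection.

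The heart of the argument is to decompose
\[
\Phi_{\dif}(\rho)=\sum_{k=1}^{s+t-2}\frac{1}{\rho(k)-\rho(k+1)}
\]
into three pieces: the terms with $1\le k\le s-1$ (entirely inside the $\sigma$-block), the single seam term $k=s$, and the terms with $s+1\le k\le s+t-2$ (entirely inside the shifted $\tau$-block). The first piece equals $\Phi_{\dif}(\sigma)=0$ by definition. For the third piece, the additive shift by $s-1$ cancels inside each difference, giving $\rho(k)-\rho(k+1)=\tau(j)-\tau(j+1)$, so this piece equals $\sum_{j=2}^{t-1}1/(\tau(j)-\tau(j+1))$, which is $\Phi_{\dif}(\tau)$ with its first summand removed.

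Everything then turns on the seam term, and this is the only place where the boundary hypotheses $\sigma(s)=s$ and $\tau(1)=1$ are both used. Here $\rho(s)=s$ and $\rho(s+1)=s-1+\tau(2)$, so
\[
\frac{1}{\rho(s)-\rho(s+1)}=\frac{1}{1-\tau(2)}=\frac{1}{\tau(1)-\tau(2)},
\]
which is exactly the missing first summand of $\Phi_{\dif}(\tau)$. Adding the seam term to the third piece thus reconstitutes the full sum $\Phi_{\dif}(\tau)=0$, and combining with the first piece yields $\Phi_{\dif}(\rho)=0$. I expect this seam computation to be the only genuinely delicate step; the rest is bookkeeping, since it is precisely where the constant shift and the two anchoring conditions conspire to glue the two blocks together. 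Finally, the ``furthermore'' claim is immediate: if $\tau(t)=t$, then $\rho(s+t-1)=s-1+\tau(t)=s-1+t=s+t-1$.
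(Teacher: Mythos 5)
Your proof is correct and follows essentially the same route as the paper: split $\Phi_{\dif}(\rho)$ at the seam and use $\sigma(s)=s$, $\tau(1)=1$ to identify the seam term $1/(\rho(s)-\rho(s+1))$ with the first summand $1/(\tau(1)-\tau(2))$ of $\Phi_{\dif}(\tau)$ — the paper just phrases this by noting the second case of the definition of $\rho$, extended to $k=s$, agrees with the first case. Your explicit check that $\rho$ is a bijection is a small addition the paper leaves implicit.
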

\begin{proof}
	Notice that in the definition of $\rho$, if we allow $k=s$ in the second
	case, the expression will give the same definition of $\rho(s)$ as in the first case, 
	since $\sigma(s)=s=s-1+\tau(1)$. Hence,
\begin{align*}
	\Phi_{\dif}(\rho)&=
	\sum_{k=1}^{s-1} \frac{1}{\rho(k)-\rho(k+1)}+
	\sum_{k=s}^{s+t-2} \frac{1}{\rho(k)-\rho(k+1)}\\
	&=\sum_{k=1}^{s-1} \frac{1}{\sigma(k)-\sigma(k+1)}+
	\sum_{k=1}^{t-1} \frac{1}{\tau(k)-\tau(k+1)}\\
	&=\Phi_{\dif}(\sigma) + \Phi_{\dif}(\tau)\\
	&=0.
\end{align*}
	Furthermore, if $\tau(t)=t$, it is easy to see that $\rho(s+t-1)=s+t-1$.
	\end{proof}

Let us write the link $\rho$ of $\sigma$ and $\tau$ by $\langle \sigma, \tau\rangle$.
\smallskip

{\it Example}. Take $\sigma=(1,4,2,5,3,6)$ and $\tau=(1,3,2,4)$. We verify that
\begin{align*}
	\Phi_{\dif}(\sigma) &= -\frac{1}{3} + \frac 12 - \frac 13 + \frac 12 -\frac 13 =0, \cr
	\Phi_{\dif}(\tau) &= -\frac 12 + \frac 11 -\frac 12=0.  \cr
\end{align*}
We have $\rho = \langle \sigma, \tau \rangle = (1,4,2,5,3,6,8,7,9)$ and
\begin{equation*}
	\Phi_{\dif}(\rho) = \left(-\frac{1}{3} + \frac 12 - \frac 13 + \frac 12 -\frac 13\right) 
	+ \left(-\frac 12 + \frac 11 -\frac 12\right)=0. 
\end{equation*}

If $\tau(t)=t$, since the link $\rho = \langle \sigma, \tau \rangle  $ also
satisfies the conditions $\rho(s+t-1)=s+t-1$ 
and $\Phi(\rho)=0$, we can ``link" again and obtain
$
\langle \rho, \tau \rangle =
\langle \langle \sigma, \tau \rangle, \tau \rangle .
$

\medskip

The following proposition is a slightly stronger version of  Theorem \ref{th:main1}(i) for the rational function $\Phi_{\dif}$. 

\begin{Proposition}\label{th:prop1}
	For any integer $n>5$, there is a permutation $\pi\in \Sym_n$ 
	such that $\pi(1)=1, \pi(n)=n$, and $\Phi_{\dif}(\pi)=0$. \\
\end{Proposition}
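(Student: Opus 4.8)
The plan is to reduce the statement to a finite computation via the Link lemma. Observe that the conclusion of Proposition~\ref{th:prop1} asks precisely for a permutation of ``base type'': $\pi(1)=1$, $\pi(n)=n$, and $\Phi_{\dif}(\pi)=0$. The Link lemma is tailor-made to assemble such permutations out of smaller ones. Indeed, if $\mu\in\Sym_p$ satisfies $\mu(1)=1$, $\mu(p)=p$, $\Phi_{\dif}(\mu)=0$ and $\nu\in\Sym_q$ satisfies $\nu(1)=1$, $\nu(q)=q$, $\Phi_{\dif}(\nu)=0$, then the link $\rho=\langle\mu,\nu\rangle\in\Sym_{p+q-1}$ again satisfies $\rho(1)=\mu(1)=1$ (from the defining formula for $\rho$), $\rho(p+q-1)=p+q-1$ (by the final clause of the Link lemma, using $\nu(q)=q$), and $\Phi_{\dif}(\rho)=0$. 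Hence the base type is closed under linking.

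First I would fix the size-$6$ building block $\beta=(1,4,2,5,3,6)$, which is exactly the permutation $\sigma$ of the worked example and satisfies $\beta(1)=1$, $\beta(6)=6$, $\Phi_{\dif}(\beta)=0$. By the closure property above, linking any base permutation $\mu\in\Sym_p$ on the left with $\beta$ on the right produces a base permutation $\langle\mu,\beta\rangle$ of size $p+6-1=p+5$. This yields an inductive step that raises the size by exactly $5$.

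The induction then runs as follows. As base cases I would exhibit explicit base permutations for each of the five sizes $n=6,7,8,9,10$ (one per residue class modulo $5$), found by computer search, and check directly that each has $\pi(1)=1$, $\pi(n)=n$, and $\Phi_{\dif}=0$. For the inductive step, given $n\ge 11$ we have $n-5\ge 6$, so by the induction hypothesis there is a base permutation $\mu$ of size $n-5$; then $\langle\mu,\beta\rangle$ is a base permutation of size $n$. Since the five base cases cover $6\le n\le 10$ and the inductive step disposes of every $n\ge 11$, all $n>5$ are handled, which is exactly what the five consecutive residues modulo $5$ are arranged to guarantee.

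The only genuine obstacle is the base step: one must actually produce the five explicit permutations of sizes $6$ through $10$. This is a finite search rather than a structural difficulty — the size-$6$ case is already supplied by the example $\beta$ — and once the five permutations are written down, verifying $\Phi_{\dif}=0$ is a routine rational-arithmetic check. Everything beyond these five cases is handled uniformly by the single linking operation with $\beta$.
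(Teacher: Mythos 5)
Your proposal is correct and is essentially the paper's own argument: the paper also proceeds by induction, linking explicit seed permutations via the Link lemma, except that it links with the length-$4$ block $(1,3,2,4)$ (raising the size by $3$, so three seeds of sizes $6$, $4$, $8$ suffice), whereas you link with the length-$6$ block (raising the size by $5$, so you need five seeds of sizes $6$ through $10$). The one incomplete point is that you exhibit only one of your five required seeds; the other four do exist, e.g.
$(1,3,2,4,6,5,7)$, $(1,3,6,4,7,5,2,8)$, $(1,4,2,5,3,6,8,7,9)$, and $(1,3,2,4,6,5,7,9,8,10)$,
each of which satisfies $\pi(1)=1$, $\pi(n)=n$, $\Phi_{\dif}(\pi)=0$ (the first, third and fourth are themselves links of the paper's blocks), so your induction goes through exactly as planned.
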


\vspace{-6mm}

\begin{proof}
Let
	\begin{align*}
		\sigma_0 &= (1,4,2,5,3,6), \\
		\sigma_1 &= (1,3,2,4), \\
		\sigma_2 &= (1,3,6,4,7,5,2,8),
	\end{align*}
	and $\tau=\sigma_1=(1,3,2,4)$. We have $\Phi_{\dif}(\sigma_j)=0$ for $j=0,1,2$, and $\tau(1)=1, \tau(4)=4$.
By repeated application of the link algorithm, we obtain the following three families of permutations:
\begin{align*}
		&(1,4,2,5,3,6), \\
		&(1,4,2,5,3,6\mid8,7,9), \\
		&(1,4,2,5,3,6\mid8,7,9\mid11,10,12), \\
		&\vdots\\
		&(1,3,2,4) ,\\
		&(1,3,2,4\mid6,5,7) ,\\
		&(1,3,2,4\mid6,5,7\mid9,8,10) ,\\
		&\vdots\\
		&(1,3,6,4,7,5,2,8) ,\\
		&(1,3,6,4,7,5,2,8\mid10,9,11) ,\\
		&(1,3,6,4,7,5,2,8\mid10,9,11\mid13,12,14) ,\\
		&\vdots
\end{align*}
	\vspace{-3mm}
	of length
\begin{align*}
	n&=6,9,12,15,\ldots    \qquad (3k) \\
	n&=4,7,10,13,\ldots    \qquad (3k+1) \\
	n&=8,11,14,17\ldots    \qquad (3k+2)
\end{align*}
	Hence we have constructed one permutation $\pi\in\Sym_n$ for each $n>5$ such that $\Phi_{\dif}(\pi)=0$ and $\pi(1)=1$ and $\pi(n)=n$.
\end{proof}

The following proposition is another enhanced version of  Theorem \ref{th:main1}(i) for $\Phi_{\dif}$, 
which is crucial for proving Theorem \ref{th:main1}(ii) for $\Phi_{\cycdif}$. 
The difference between 
Propositions \ref{th:prop1} and \ref{th:prop2} lies in the value of $\pi(n)$.

\begin{Proposition}\label{th:prop2}
	For any integer $n>7$, there is a permutation $\pi\in \Sym_n$ 
	such that $\pi(1)=1, \pi(n)=n-1$ and $\Phi_{\dif}(\pi)=0$. \\
\end{Proposition}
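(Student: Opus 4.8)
The plan is to mimic the strategy behind Proposition~\ref{th:prop1}, but now the constraint $\pi(n)=n-1$ (rather than $\pi(n)=n$) forces the ``defect'' to sit permanently at the right end of the permutation. Concretely, I would keep a small permutation that ends at its length minus $1$ fixed as the right factor, and enlarge the permutation only on the left. The key point is that the block $\tau=(1,3,2,4)$ satisfies $\tau(1)=1$, $\tau(4)=4$ and $\Phi_{\dif}(\tau)=0$, so I may form the link $\langle\tau,\pi\rangle$ with $\tau$ on the left: the Link Lemma~\ref{th:link} (its left factor needs to be fixed at its length, which holds for $\tau$, and its right factor needs first value $1$, which holds for $\pi$) then gives $\Phi_{\dif}(\langle\tau,\pi\rangle)=0$, first value $1$, and length $n+3$. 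A direct check of the defining formula for $\rho$ in Lemma~\ref{th:link} shows that if $\pi$ ends at its length minus $1$ then so does $\langle\tau,\pi\rangle$; thus prepending copies of $\tau$ preserves all three desired properties while increasing the length by $3$ at each step.

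It then suffices to exhibit three seed permutations, one in each residue class modulo $3$, that start at $1$, end at their length minus $1$, and have $\Phi_{\dif}=0$. I would take them of lengths $8,9,10$, since these are the three smallest integers exceeding $7$ and they represent the classes $2,0,1\pmod 3$; prepending the appropriate number of copies of $\tau$ to the seed of the correct class then realizes every $n>7$. For the class $2\pmod 3$ one can use, for instance,
\[
\mu^{(8)}=(1,5,8,4,2,6,3,7),\qquad
\Phi_{\dif}(\mu^{(8)})=-\tfrac14-\tfrac13+\tfrac14+\tfrac12-\tfrac14+\tfrac13-\tfrac14=0,
\]
with $\mu^{(8)}(1)=1$ and $\mu^{(8)}(8)=7$. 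The remaining two seeds, of lengths $9$ and $10$, I would locate by a short computer search of the kind used elsewhere in the paper.

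The main obstacle is precisely the existence of these seeds. Unlike the situation in Proposition~\ref{th:prop1}, there is no seed of very small length: a quick enumeration shows that \emph{no} permutation in $\Sym_6$ with first value $1$ and last value $5$ has $\Phi_{\dif}=0$, and the candidate sums cluster tantalizingly near $0$ (for example $(1,4,8,5,2,6,3,7)$ gives $-\tfrac1{12}$ while $(1,4,8,6,2,5,3,7)$ gives $+\tfrac1{12}$), so genuine zeros must be pinned down with some care. Once the three seeds are in hand, the argument closes exactly as in Proposition~\ref{th:prop1}: the Link Lemma guarantees that each prepended family consists of permutations $\pi\in\Sym_n$ with $\pi(1)=1$, $\pi(n)=n-1$, and $\Phi_{\dif}(\pi)=0$, and the three families of lengths $8,11,14,\dots$, $9,12,15,\dots$ and $10,13,16,\dots$ together cover every integer $n>7$.
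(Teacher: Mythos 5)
Your proposal is correct in its mechanics and rests on the same key tool as the paper -- the Link Lemma combined with computer-found seed permutations -- but it organizes the construction differently. The paper handles $n=8,\dots,12$ by five explicit permutations $\alpha_8,\dots,\alpha_{12}$ and, for $n\ge 13$, forms a \emph{single} link $\langle\sigma,\alpha_8\rangle$, where $\sigma\in\Sym_{n-7}$ is the permutation supplied by Proposition~\ref{th:prop1} and the block ending at ``length minus one'' sits on the right. You dispense with Proposition~\ref{th:prop1} entirely and instead iterate the link with the fixed left block $\tau=(1,3,2,4)$, which requires only three seeds (lengths $8,9,10$) instead of five base cases; this is a slightly leaner bookkeeping of the same idea. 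Your checks are sound: the Link Lemma applies since $\tau(4)=4$ and $\pi(1)=1$, and the end value propagates as $\rho(n+3)=3+\pi(n)=n+2=(n+3)-1$, so prepending preserves all three properties; your seed $\mu^{(8)}=(1,5,8,4,2,6,3,7)$ indeed satisfies $\Phi_{\dif}(\mu^{(8)})=0$ (and differs from the paper's $\alpha_8$). The only incompleteness is that the length-$9$ and length-$10$ seeds are deferred to a computer search rather than exhibited; they do exist -- the paper's own $\alpha_9=(1,4,2,5,9,3,7,6,8)$ and $\alpha_{10}=(1,2,6,3,7,8,5,4,10,9)$ serve -- so this gap is immediately fillable and your argument then closes every $n>7$ through the three families $8,11,14,\dots$; $9,12,15,\dots$; $10,13,16,\dots$.
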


\vspace{-7mm}

\begin{proof}
	Let
\begin{align*}
	\alpha_8&= (1,2,4,8,6,5,3,7),\\
	\alpha_9&= (1,4,2,5,9,3,7,6,8),\\
	\alpha_{10}&=(1,2,6,3,7,8,5,4,10,9),\\
	\alpha_{11}&=(1,2,3,4,6,5,9,8,7,11,10),\\
	\alpha_{12}&=(1,2,3,6,4,8,12,10,9,7,5,11).
\end{align*}
	We have $\alpha_j(1)=1$, $\alpha(j)=j-1$, and $\Phi_{\cycdif}(\alpha_j)=0$ for $j=8,9,\ldots, 12$. The proposition is true for $j=8,9, \ldots, 12$. For $n\geq 13$ and $k=n-7\geq 6$, take the permutation 
	$\sigma\in\Sym_{k}$ obtained in Proposition \ref{th:prop1}, i.e.,
	$\sigma(1)=1, \sigma(k)=k, \Phi_{\dif}(\sigma)=0$.
	Then, the link $\rho=\langle \sigma, \alpha_8\rangle$  satisfies 
	$\rho(1)=1, \rho(n)=n-1$ and $\Phi_{\dif}(\rho)=0$.
	For example, for $n=13$ and  $k=6$,
$$
	\rho=\langle (1,4,2,5,3,6), (1,2,4,8,6,5,3,7)\rangle = (1,4,2,5,3,6,7,9,13,11,10,8,12).
$$
Hence, the proposition is true for any $n>7$.
\end{proof}

Now we are ready to prove part (ii) of Theorem \ref{th:main1}
for the rational function $\Phi_{\cycdif}$.

\begin{proof}[Proof of Theorem \ref{th:main1}(ii)]
	If $n=2k$ is even, we can easily check that
	the permutation
\begin{equation*}
		\pi=(1,2,3,\ldots, k-1, k, 2k, 2k-1, \ldots, k+3, k+2, k+1),
\end{equation*}
which is obtained by the concatenation of the increasing permutation
	of $\Sym_k$ and the decreasing permutation of $\{j \mid k+1\leq j\leq 2k\}$,
	satisfies 
	\begin{equation*}
\Phi_{\cycdif}(\pi)=
		\left(-\frac 11 - \frac 11 - \cdots -\frac 11 -\frac {1}{k}
		+\frac 11 + \frac 11 + \cdots +\frac 11\right) +\frac {1}{k}
=0.
\end{equation*}
The odd case is more complicated. First, we define 
\begin{align*}
	\beta_9 &= (2,1,4,5,9,3,7,6,8), \\
	\beta_{11} &= (1,2,11,5,4,8,7,9,3,6,10), \\
	\beta_{13} &= (1,2,13,3,5,4,9,8,10,6,11,7,12).
\end{align*}
	We have $\Phi_{\cycdif}(\beta_j)=0$ for $j=9,11,13$. Next, for $n=2k+1\geq 15$, i.e., $k\geq 7$ and $m=k+1\geq 8$, by Propositions \ref{th:prop1} 
	and \ref{th:prop2}, there exist two permutations
	$\sigma\in\Sym_{k}$ and $\sigma\in\Sym_{m}$ such that

	{\rm (i)} $\sigma(1)=1$, $\sigma(k)=k$, and $\Phi_{\dif}(\sigma)=0$;

	{\rm (ii)} $\tau(1)=1$, $\tau(m)=m-1$, and $\Phi_{\dif}(\tau)=0$.

	Let $\tau'$ be the permutation of $\{j\mid k+1\leq j \leq 2k+1\}$
	obtained by adding $k$ in the reverse of $\tau$:
	\begin{equation*}
		\tau'=(k+\tau(m), k+\tau(m-1), k+\ldots, k+\tau(3), k+\tau(2), k+\tau(1) ).
\end{equation*}
	Notice that the first and last elements of $\tau'$ are $k+\tau(m)=2k$ and 
	$k+\tau(1)=k+1$, respectively. Let $\rho$ be the concatenation of $\sigma$ and $\tau'$. 
	We can verity that  $\Phi_{\dif}(\tau') = -\Phi_{\dif}(\tau)=0$ and
	\begin{equation*}
		\Phi_{\cycdif}(\rho) = \Phi_{\dif}(\sigma) + \frac {1}{k-2k} + \Phi_{\dif}(\tau') + \frac {1}{(k+1)-1}=0.
	\end{equation*}
For example, for $n=15$, $k=7$ and $m=8$, we have
$\sigma=(1,3,2,4,6,5,7)$ and $\tau= (1,2,4,8,6,5,3,7)$, so that
	$\tau'=(14, 10, 12, 13, 15, 11, 9, 8 )$. Our final permutation $\rho$
	is the concatenation of $\sigma$ and $\tau'$:
	\begin{equation*}
		\rho=(1,3,2,4,6,5,7, \ 14, 10, 12, 13, 15, 11, 9, 8).
	\end{equation*}
	We can check that $\Phi_{\cycdif}(\rho)=0$.
\end{proof}

To prove part (iii) of Theorem \ref{th:main1}
concerning the rational function $\Phi_{\product}$, we need the following lemma. Also, it is much more convenient to describe the construction in the increasing binary trees model \cite{Stanley2012EC1, Flajolet2009Sed}.

\begin{Lemma}[Insertion]\label{th:insertion}
	Let $\sigma=\sigma(1)\sigma(2)\cdots\sigma(n-1)\in\Sym_{n-1}$ 
	be a permutation and  $\tau\in\Sym_n$ be the permutation
	obtained by insertion of the letter $n$ into $\sigma$:
	$$\tau=\sigma(1)\cdots \sigma(j)\,n\,\sigma(j+1)\cdots\sigma(n-1). 
	\qquad (j=1,2,\ldots, n-2)$$
	Then, $\Phi_{\product}(\sigma)=\Phi_{\product}(\tau)$ if and only if
	$\sigma(j)+\sigma(j+1)=n$.
\end{Lemma}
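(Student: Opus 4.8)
The plan is to exploit the fact that $\Phi_{\product}$ is a sum indexed only by pairs of consecutive letters, so inserting the letter $n$ alters the sum purely \emph{locally}. First I would observe that, since $1\leq j\leq n-2$, the letter $n$ is placed strictly inside $\sigma$: it destroys the single adjacency $(\sigma(j),\sigma(j+1))$ and replaces it by the two adjacencies $(\sigma(j),n)$ and $(n,\sigma(j+1))$, while every other pair of neighbours is identical in $\sigma$ and in $\tau$. Consequently all but three terms cancel telescopically in the difference $\Phi_{\product}(\tau)-\Phi_{\product}(\sigma)$.

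Next I would write that difference down explicitly,
\begin{equation*}
	\Phi_{\product}(\tau)-\Phi_{\product}(\sigma)
	=\frac{1}{\sigma(j)\,n}+\frac{1}{n\,\sigma(j+1)}-\frac{1}{\sigma(j)\sigma(j+1)},
\end{equation*}
and factor out $\dfrac{1}{\sigma(j)\sigma(j+1)}$ to obtain
\begin{equation*}
	\Phi_{\product}(\tau)-\Phi_{\product}(\sigma)
	=\frac{1}{\sigma(j)\sigma(j+1)}\left(\frac{\sigma(j)+\sigma(j+1)}{n}-1\right).
\end{equation*}
Since $\sigma(j)$ and $\sigma(j+1)$ are positive integers, the prefactor is nonzero, so the difference vanishes if and only if $\sigma(j)+\sigma(j+1)=n$, which is exactly the asserted equivalence.

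There is essentially no hard step here; the entire content is the locality observation combined with the one-line algebra above. The only point requiring a moment of care is the hypothesis $1\leq j\leq n-2$, which guarantees that $n$ is inserted between two genuine letters of $\sigma$ rather than at either end, so that precisely one old adjacency is broken and precisely two new ones are created. Were $n$ placed at the very first or very last position, only one new adjacency would appear, and both the computation and the resulting condition would differ; restricting $j$ to the interior is therefore what makes the clean symmetric criterion $\sigma(j)+\sigma(j+1)=n$ available, and this is the form in which the lemma will feed the subsequent tree-based construction for $\Phi_{\product}$.
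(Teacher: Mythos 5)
Your proof is correct and follows essentially the same route as the paper: both arguments reduce to the local computation that insertion replaces the term $\tfrac{1}{\sigma(j)\sigma(j+1)}$ by $\tfrac{1}{\sigma(j)\,n}+\tfrac{1}{n\,\sigma(j+1)}=\tfrac{\sigma(j)+\sigma(j+1)}{n\,\sigma(j)\sigma(j+1)}$, and equality holds precisely when $\sigma(j)+\sigma(j+1)=n$. Phrasing it as a vanishing difference with a nonzero prefactor, rather than as equality of the two sums, is only a cosmetic variation.
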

\begin{proof} We have
\begin{align*}
	\Phi_{\product}(\sigma) = \sum_{k=1}^{n-2} 
	\frac {1}{\sigma(k)\sigma(k+1)} 
	&= \cdots + \frac {1}{\sigma(j)\sigma(j+1)} +\cdots \cr
	\Phi_{\product}(\tau) = \sum_{k=1}^{n-1} 
	\frac {1}{\tau(k)\tau(k+1)} 
	&= \cdots + \frac {1}{\sigma(j)\,n} + \frac {1}{n\,\sigma(j+1)} +\cdots \cr
	&	= \cdots + \frac {\sigma(j)+\sigma(j+1)}{n\, \sigma(j)\sigma(j+1)} +\cdots 
\end{align*}
	Hence, $\Phi_{\product}(\sigma)=\Phi_{\product}(\tau)$ if and only if
 $$\frac {1}{\sigma(j)\sigma(j+1)} 
=	 \frac {\sigma(j)+\sigma(j+1)}{n\, \sigma(j)\sigma(j+1)},$$
i.e.,  $\sigma(j)+\sigma(j+1)=n$.
\end{proof}

\tikzset{
  treenode/.style = {align=center, inner sep=0pt, text centered,
    },
  arnn/.style = {treenode, circle,  draw=black, text width=1.3em },
  arnr/.style = {treenode, circle,  draw=red, text width=1.3em },
  arnb/.style = {treenode, circle,  draw=blue, text width=1.3em },
 }

	\begin{figure*}[tbp]
\begin{tikzpicture}[-,>=stealth',level/.style={sibling distance = 6cm/#1,
  level distance = 0.9cm}] 
\node [arnn] {1}
child{ node [arnn] {4} 
	child{ node [arnn] {6} 
		child[missing]{  {}}
		child{ node [arnn] {10} 
		child{node[arnb]  {16}}
		child{node[arnn]  {14}
			child{node[arnb]  {24}}
			child{node[arnn]  {18}
			child{node [arnb]{32}}
			child{node[arnn]  {22}
			child[missing]{}
			child{node[arnn]  {26}
			child[missing]{}
			child{node[arnn]  {30}
			}
}
	}
	}
		}
		} 
	}
	child[missing]{	}                            
}
child{ node [arnn] {2}
				child[missing]{ }
        child{ node [arnn] {3}
					child{ node [arnn] {5}
						child{ node [arnn] {7}
					child{ node [arnn] {9}
						child{ node [arnn] {11}
child{ node [arnn] {13}
child{ node [arnn] {15}
child{ node [arnn] {17}
child{ node [arnn] {19}
child{ node [arnn] {21}
child{ node [arnn] {23}
child{ node [arnn] {25}
child{ node [arnn] {27}
child{ node [arnn] {29}
child{ node [arnn] {31}
}
child[missing]{}
}
child[missing]{}
}
child[missing]{}
}
child[missing]{}
}
child[missing]{}
}child[missing]{}
}child[missing]{}
	}child{node [arnr]{28}}
}
child[missing]{}
}
					child{ node [arnr] {20}}
						}
						child[missing]{}
					}
					child{ node [arnr] {12}}
						}
						child[missing]{}
					}
					child{ node [arnn] {8}}
        }
}
; 
\node at (-3.5,-5.4) {$8k$};
\node at (-0.4,-5.7) {$4k+2$};
\node  at (2.6,-7.8) {$8k+4$};
\node  at (0.3,-13) {$2k+1$};
\end{tikzpicture}
\caption{The increasing binary tree for $\delta_{32}$}
\label{fig:1}
\end{figure*}

Now we use the insertion lemma to prove part (iii) of our main theorem.

\begin{proof}[Proof of Theorem \ref{th:main1}(iii)]
	Let
\begin{align*}
\delta_6 &=  (2, 1, 3, 4, 5, 6), \\
	\delta_7&= (2, 1, 3, 7, 4, 5, 6),\\
	\delta_8&=(6,4,1,2,7,5,3,8).
\end{align*}
We verify that $\Phi_{\product}(\delta_j)=1$ for $j=6,7,8$.
	By insertion lemma, we define $\sigma_9$
	by inserting the letter $9$ between $2$ and $7$ in $\sigma_8$:
	$$
	\sigma_9=(6,4,1,2,{\bf 9}, 7,5,3,8).
	$$
	Next, we insert $10$ in $\sigma_9$ between $6$ and $4$:
	$$
	\sigma_{10}=(6,{\bf 10}, 4,1,2, 9, 7,5,3,8).
	$$
	For the insertion of $11$, we have two possible positions, namely, between 
	$(2, 9)$  and $(3, 8)$. We choose the position $(2,9)$ and define
	$$
	\sigma_{11}=(6, 10, 4,1,2,{\bf 11}, 9, 7,5,3,8).
	$$
The crucial idea is to show that this kind of insertion can be repeatedly applied  as many times as we want, starting from $\sigma_8$. Thus, we obtain the desired permutation $\sigma_n\in\Sym_n$ for each $n\geq 8$.
To understand the general pattern, we take a rather big example
	with $n=32$. Our permutation $\delta_{32}$ is as follows:
	\begin{multline*}
		\delta_{32}=(6, 16, 10, 24, 14, 32, 18, 22, 26, 30, 4, 1, 2, \\
31, 29, 27, 25, 23, 21, 19, 17, 15, 28, 13, 11, 20, 9, 7, 12, 5, 3, 8),
	\end{multline*}
	which can be represented by the increasing binary tree \cite{Stanley2012EC1, Flajolet2009Sed} in Figure~\ref{fig:1}.
We see that the nodes of the form $2k+1, 4k+2, 8k, 8k+4$ all appear in the tree structure. Hence,
the insertion can be repeatedly applied to reach each $\delta_{n}$
for $n\geq 8$.
\end{proof}

As proved in Theorem \ref{th:main1}, 
for any integer $n>5$, there is a permutation $\pi\in \Sym_n$ 
such that $\Phi_{\dif} (\pi) =0$. 
For a permutation $\pi$, the value of $\Phi_{\dif}(\pi)$ is, a priori,
a rational number.
Theorem \ref{th:main2}
 provides a full 
characterization of the integer values of the rational function $\Phi_{\dif}$.

\begin{proof}[Proof of Theorem \ref{th:main2}]
	In fact, we need to prove a stronger statement by replacing each $V_n$ in 
	the theorem by
\begin{equation}
	V_n'=\left\{\sum_{k=1}^{n-1} \frac{1}{\pi(k)-\pi(k+1)}  : 
	\pi\in\Sym_n, \pi(n)=n\right\}.
\end{equation}

It is easy to see that $n-1\in V_n'$ by taking the identity permutation 
	$(1,2,\ldots n)$. Also, the maximal value of $\Phi_{\dif}(\pi)$ is $n-1$, 
	so that $m\not\in V_n$ for $m\geq n$. 
We can check by computer for all $n\leq 6$. 
If $n\geq 7$, we prove by induction on $n$.
	First, $0\in V_n'$ by Proposition \ref{th:prop1}.
	Next, for $1\leq m \leq n-3$, by the induction hypothesis, there is a permutation $\tau\in\Sym_{n-1}$ such that $\tau(n-1)=n-1$ and $\Phi_{\dif}(\tau)=m-1$. 
	We define
	$$
	\pi=(\tau(1), \tau(2), \ldots, \tau(n-1), n).
	$$
	We verify that $\Phi_{\dif}(\pi)= 1 + \Phi_{\dif}(\tau)=m$.
Finally, for each $\sigma\in\Sym_n$ which is not the identity permutation, 
	we see that there exists at least one negative term in the summation \eqref{eq:dif}.
	Hence $\Phi_{\dif}(\sigma)<n-2$, and $n-2\not\in V_n$.
\end{proof}






\end{document}